\newcommand{\m}{\mathfrak m}
\newcommand{\len}{\lambda}
\newcommand{\n}{\mathfrak n}
\newcommand{\J}{\text{Jac}}
\theoremstyle{plain}
\numberwithin{equation}{section}
\newtheorem{theorem}{Theorem}[section]
\newtheorem{discussion}[theorem]{Discussion}
\newtheorem{lemma}[theorem]{Lemma}
\newtheorem{corollary}[theorem]{Corollary}
\newtheorem{definition}[theorem]{Definition}
\newtheorem{remark}[theorem]{Remark}
\newtheorem{example}[theorem]{Example}
\begin{document}

\title{Bounding the first Hilbert coefficient}

\author[Krishna Hanumanthu]{Krishna Hanumanthu}
\address{Department of Mathematics, University of Kansas, Lawrence, KS 66045}
\email{khanuma@math.ku.edu}

\author[Craig Huneke]{Craig Huneke}
\address{Department of Mathematics, University of Kansas, Lawrence, KS 66045}
\email{huneke@math.ku.edu}

\subjclass[2010]{Primary 13A30, 13B22, 13D40, 13H15}
\keywords{}
\thanks{The second author was partially supported by the National Science Foundation, grant
DMS-0756853}

\date{October 28, 2010}
\maketitle

\begin{abstract}
This paper gives new bounds on the first Hilbert coefficient of an ideal of finite
colength in a Cohen-Macaulay local ring. The bound given is quadratic in the multiplicity
of the ideal. We compare our bound to previously known bounds, and give examples to show
that at least in some cases it is sharp. The techniques come largely from work of Elias,
Rossi, Valla, and Vasconcelos.
\end{abstract}

\section{Introduction}

Throughout this paper we study a commutative Noetherian Cohen-Macaulay
local ring $R$ with maximal ideal $\m$ and  infinite residue field $k$. Let
$I$ be an $\m$-primary ideal. We let $d$ be the dimension of $R$, we use $
\mu(\quad)$ to denote the minimal number of generators, and
use $\lambda(\quad)$ to denote the length of an $R$-module.
We can write
$$\lambda(R/I^{n+1}) = e_0(I)\binom{n+d}{d}-e_1(I)\binom{n+d-1}{d-1} + ... +(-1)^de_d(I)$$
where the $e_j(I)$ are integers called the Hilbert coefficients (of $I$),
and the equation is valid for all large values of $n$. There are many studies of 
relationships between the various Hilbert coefficients, and bounds for them. Such bounds are necessary, for example,
to prove finiteness of Hilbert functions of ideals with fixed multiplicity. The work of Srinivas and Trivedi \cite{ST} does
exactly  this.  Northcott \cite{N}
gave one of the first such bounds. He proved that if $R$ and $I$ are as above, then $e_1(I) \geq e_0(I) - \lambda(R/I)$. 
This bound can be improved by adding another term involving the reduction number of $I$.
An ideal $J\subset I$ is a reduction of $I$ if $I^{n+1} = JI^n$ for all large $n$. Let $J$ be a reduction of $I$. The \it reduction
number of $I$ with respect to $J$\rm, denoted $r_J(I)$, is the least integer $r$ such that 	$I^{r+1} = JI^r$. The
least reduction number of $I$ with respect to all reductions is called the \it reduction number of $I$\rm,
denoted $r(I)$. 
Rossi \cite{R} proved if $d\leq 2$, then $e_1(I) \geq e_0(I) - \lambda(R/I) +r(I) -1$.

In this paper we give an upper bound on $e_1(I)$, as opposed to the lower bound given by
Northcott. Several upper bounds exist in the literature. 
In \cite[Prop. 2.10]{RV}, the bound $e_1(I) \leq \binom {e_0(I)-k+1}{2}$ is given for filtrations; in our context, their assumption
is that $I\subset \m^k$.
Elias \cite[Proposition 2.5]{E1} generalized this
bound by proving that  $e_1(I) \le \binom{e_0(I)-k}{2}$ if $I\subset \m^k$ and the integral closure of $I$ is
not the integral closure of $\m^k$. 
Our main result gives another bound in dimension one which can then be extended to higher dimension, and which in general is stronger than
the previously known bounds. As a corollary, our main result can be used to recover, and slightly extend, the result of Elias stated above. 
He also gives other bounds which we compare ours to in Section 3.
Our methods come from those in \cite{RVV}, but with one additional twist.  
We also show our results are sharp by giving a sequence of examples.

\bigskip

\section{Dimension One}

\bigskip
It is well-known that $e_1(I)$ and $e_0(I)$ can be preserved after going modulo general elements of $I$ until one reaches dimension one.
Thus the one-dimensional case is crucial. But one must also possibly preserve other assumptions, and this makes some difficulties in
obtaining optimal results.
Our first theorem handles the case of dimension one.

\begin{theorem}\label{onedim}
Let $(R,\mathfrak{m})$ be a Cohen-Macaulay local ring of dimension one, and let $I \subset R$ be an $\mathfrak{m}-$primary ideal.
Suppose that there exist distinct integrally closed ideals $J_1, ..., J_{k-1}$ such that $\mathfrak{m} \supsetneq J_{k-1} \supseteq J_{k-2} \supseteq .... \supseteq
 J_{1} \supsetneq \bar{I}$, where $\bar{I}$ is the integral closure of $I$. Then 
$e_1(I) \le \binom{e_0(I)-k}{2}$. If $I$ is not integrally closed, $e_1(I) <  \binom{e_0(I)-k}{2}$.
\end{theorem}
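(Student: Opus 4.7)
The plan is to use a minimal reduction to reduce $e_1(I)$ to a sum of lengths, bound the first term using the chain of integrally closed ideals, and invoke a Rossi--Valla--Vasconcelos-style monotonicity argument for the tail.

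Since the residue field is infinite, pick a superficial $x \in I$ so that $J = (x)$ is a minimal reduction; as $\dim R = 1$ and $R$ is Cohen--Macaulay, $x$ is a nonzerodivisor with $\lambda(R/xR) = e_0(I)$. Iterating the short exact sequences
\[
0 \to I^n/x^nR \xrightarrow{\;\cdot x\;} I^{n+1}/x^{n+1}R \to I^{n+1}/xI^n \to 0
\]
yields the classical identity $e_1(I) = \sum_{n \geq 0} v_n$ with $v_n := \lambda(I^{n+1}/xI^n)$, a finite sum since $v_n = 0$ for $n \geq r_J(I)$. In particular the first term is $v_0 = e_0(I) - \lambda(R/I)$, and the chain $R \supsetneq \mathfrak{m} \supsetneq J_{k-1} \supsetneq \cdots \supsetneq J_1 \supsetneq \bar{I} \supseteq I$ exhibits $k+1$ strict inclusions, forcing $\lambda(R/I) \geq k+1$ and hence $v_0 \leq e_0(I) - k - 1$.

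The crux is a monotonicity estimate of the form $v_n \leq v_0 - n$ (so also $v_n = 0$ for $n \geq v_0$), after which
\[
e_1(I) \leq \sum_{n=0}^{v_0}(v_0 - n) = \binom{v_0 + 1}{2} \leq \binom{e_0(I) - k}{2}.
\]
Following \cite{RVV}, one natural route is to pass to the Ratliff--Rush filtration $\{\widetilde{I^n}\}$, which has the same Hilbert polynomial as $\{I^n\}$ and, for superficial $x$, enjoys the Valabrega--Valla identity $xR \cap \widetilde{I^{n+1}} = x\widetilde{I^n}$. The relevant lengths then become lengths of a decreasing chain of ideals $W_n := (\widetilde{I^{n+1}}+xR)/xR$ in the Artinian local ring $R/xR$, on which Nakayama's lemma ($I \subseteq \mathfrak{m}$) forces a strict drop of at least one at every nonzero step. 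Reconciling this with the chain hypothesis, which still applies because $\widetilde{I} \subseteq \bar{I}$, should give the bound. Making the monotonicity precise---in particular verifying the crucial inclusion $W_{n+1} \subseteq \mathfrak{m}W_n$---is the main technical obstacle and is where the ``additional twist'' of the introduction most likely enters.

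For the strict inequality when $I$ is not integrally closed, one has $\lambda(R/I) > \lambda(R/\bar{I}) \geq k+1$ so $\lambda(R/I) \geq k+2$, improving $v_0$ to at most $e_0(I) - k - 2$ and yielding $e_1(I) \leq \binom{e_0(I) - k - 1}{2} < \binom{e_0(I) - k}{2}$; a small separate argument may be needed in the borderline subcase $\widetilde{I} = \bar{I}$ with $I \subsetneq \widetilde{I}$, where one would need to extract an extra length drop somewhere later in the filtration.
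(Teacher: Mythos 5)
Your skeleton is the right one and matches the paper's: choose a minimal reduction $(x)$, write $e_1(I)=\sum_{n\ge 0} v_n$ with $v_n=\lambda(I^{n+1}/xI^n)=e_0(I)-\lambda(I^n/I^{n+1})$, bound the terms, and sum; your bound $v_0\le e_0(I)-k-1$ from the chain is exactly the paper's first step, and your remark on the strict inequality when $I\ne\bar I$ (which improves only the $n=0$ term) is essentially how the paper gets it. But the entire content of the theorem lives in the step you leave open, and the route you sketch toward it is not viable as stated. The inclusion $W_{n+1}\subseteq \m W_n$ you identify as the obstacle amounts to $\widetilde{I^{n+2}}\subseteq \m\widetilde{I^{n+1}}+xR$, which is not a consequence of Nakayama or of the Valabrega--Valla property $xR\cap\widetilde{I^{n+1}}=x\widetilde{I^n}$; and the monotonicity $v_n\le v_0-n$ for the ordinary powers, with $v_0=e_0(I)-\lambda(R/I)$, is stronger than anything the paper establishes (recall that $\lambda(I^n/I^{n+1})$ need not even be monotone in dimension one). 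Moreover, your argument uses the chain of integrally closed ideals only once, at $n=0$, and then hopes monotonicity propagates that saving; the theorem really needs the chain to contribute at every level.

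The paper's mechanism is different and elementary. For each $1\le n\le r$ (with $r$ the reduction number) it splits $\lambda(I^n/I^{n+1})=\lambda(I^n/\m I^n)+\lambda(\m I^n/I^{n+1})$ and bounds the two pieces by inputs your proposal never invokes: (i) the Eakin--Sathaye theorem, which gives $\mu(I^n)\ge n+1$ for $n\le r$, hence $\lambda(I^n/\m I^n)\ge n+1$; and (ii) the chain hypothesis applied at level $n$, namely $\m I^n\supsetneq J_{k-1}I^n\supsetneq\cdots\supsetneq J_1I^n\supsetneq \bar I I^n\supseteq I^{n+1}$, where strictness is preserved because an equality $J_{l+1}I^n=J_lI^n$ would, by the determinantal trick, make $J_{l+1}$ integral over $J_l$, contradicting that these are distinct integrally closed ideals. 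Together these give $v_n\le e_0(I)-(n+k+1)$ for all $n\le r$, and a short quadratic computation --- reducing to $(x-(k+1))(x-(k+2))\ge 0$ with $x=e_0(I)-r$ an integer --- yields $\binom{e_0(I)-k}{2}$. So the missing idea is precisely to multiply the chain of integrally closed ideals by $I^n$ and combine it with Eakin--Sathaye to bound each $v_n$ directly, rather than to seek a Ratliff--Rush monotonicity statement; as written, your proof is incomplete at its central step.
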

\begin{proof}
For simplicity we write $e_j(I) = e_j$.
%Let $\lambda(M)$ denote the length of an $R-$module $M$. 

We have $\lambda(R/\mathfrak{m}) = 1$, $\lambda(\mathfrak{m}/J_{k-1}) \geq 1$,  
$\lambda(J_{i+1}/J_{i}) \geq 1$ for $i=1,..,k-2$, and finally $\lambda(J_{1}/I) \geq 1$.  Thus
$\lambda(R/I) \ge k+1$.

Let $r$ be the reduction number of $I$. 

By the Eakin-Sathaye theorem \cite{ES}, we see that for $n \le r$, 
$I^n$ can not be generated by fewer than $n+1$ elements. So $\lambda(I^n/\mathfrak{m}I^n) \ge n+1$.  Consider
$\lambda(\mathfrak{m}I^n/I^{n+1})$.  We claim this length is at least $k$. To see this, consider the chain of ideals,
$$\mathfrak{m}I^n\supset J_{k-1}I^n\supset J_{k-2}I^n\supset \ldots \supset J_{1}I^n\supset \bar{I}I^n.$$
None of the ideals in this chain are equal; if for example $J_{l+1}I^n = J_{l}I^n$, then the
determinantal trick (see \cite[1.1.8]{SH}) shows that $J_{l+1}$ is integral over $J_{l}$, a contradiction.
Combing the above two inequalities, we obtain that  $\lambda(I^n/I^{n+1}) \geq n+k+1$ for $n \le r$.

We have $e_0n-e_1 = \lambda(R/I^n)$ for $n >>0$. In fact,
this equality holds for $n=r+1$ and  $e_1 = (r+1)e_0-\lambda(R/I^{r+1})$. 
We rewrite this equality in the following way:
$$ e_1 = (e_0 -\lambda(R/I)) + \sum_{n=1}^{r} (e_0 - \lambda(I^n/I^{n+1})).$$
Using the inequalities above, we obtain that
$$e_1\leq e_0 - (k+1) + re_0 - \sum_{n=1}^{r} (n+k+1).$$
We claim that $$e_0 - (k+1) + re_0 - \sum_{n=1}^{r} (n+k+1)\leq \binom{e_0-k}{2},$$ which will
finish the proof.

To simplify this, we set $\alpha = \frac{r}{e_0}$. Note that
$$\sum_{n=1}^{r} (n+k+1) = \frac{(r+k+1)(r+k+2)}{2} - \frac{(k+1)(k+2)}{2}.$$
Now a simple calculation shows that
$$ \binom{e_0-k}{2} - \big{(}e_0 - (k+1) + re_0 - \sum_{n=1}^{r} (n+k+1)\big{)} $$
$$= (e_0(1-\alpha))^2 - e_0(1-\alpha)(2k+3) + (k+1)(k+2).$$
Setting $x = e_0(1-\alpha)$, our claim becomes
$$x^2 - (2k+3)x + (k+1)(k+2) = (x-(k+1))(x-(k+2))\geq 0.$$
This is clear since $x = e_0-r$ is an integer and both terms are therefore both positive or negative.
\end{proof}

We let $K$ be an infinite field for the following examples.
\begin{example}{\rm 
Let $R = K[[x,y]]/(xy^2)$ and let  $I = (x^2,y)$. 
$R$ is a Cohen-Macaulay local ring of dimension 1 and $I$ is $\m-$primary. Moreover, $I$ is an integrally closed ideal and $I \neq \m$. So we can apply Theorem \ref{onedim} with $k=1$. 

We have that $I^n = (x^{2n}, x^{ 2n-2}y, y^n)$. Since $xy^2=0$ in $R$, a product of $n$ elements of $\{x^2, y\}$ is zero if $y$ is taken at least twice and $x^2$ is taken at least once. 

A $K$-basis of $R/I^n$ is
$\{1,x,\ldots ,x^{2n-1},y, \ldots ,y^{n-1},xy,\ldots , x^{2n-3}y\}.$

Hence $\len(R/I^n) = 5n-4$, which implies that $e_0(I) = 5$ and $e_1(I)=4$. 
So the conclusion of Theorem \ref{onedim} holds. Notice also that 
$e_1(I) \nless \binom{e_0(I)-2}{2}$.}
\end{example}

\begin{example}{\rm
$R =  K[[x,y]]/(xy(x-y))$ and $I_k = (x^{k+1},y)$, $k \ge 1$. 

We claim that the reduction number of $I$ is 1. It suffices to prove that $I^2 = zI$ with $z = y+x^{k+1}$.
To prove this we can lift back to $K[[x,y]]$ and prove that $(x^{k+1},y)^2\subset (zy, zx^{k+1}, x^2y-yx^2)$.
In fact it is clear that it suffices to see that $y^2\in (zy, zx^{k+1}, x^2y-yx^2)$. But $y^2(1+x^k) =
zy-x^{k-1}(x^2y-xy^2)$. As $1+x^k$ is a unit, our claim is proved. It is well-known that having reduction number
equal to $1$ implies that $e_0(I) = \len(I/I^2)$. This latter can be easily to be 
$k+3$. Further, $\len(R/I^2) = 2e_0(I)-e_1(I)$. A simple calculation shows that 
$\len(R/I^2) = 2k+4$. So $e_1(I) = 2$. Observe that the ideals $I_k$ are integrally closed for every $k$. This
can be seen, e.g., by going modulo $y$. Therefore we have a chain of distinct integrally closed ideals,
$$\m\supset I_1\supset I_2\supset ...\supset I_{k+1}.$$ 
Applying Theorem~\ref{onedim} yields
$e_1(I) < \binom{e_0(I)-k}{2}$, but $e_1(I) \nless \binom{e_0(I)-k-1}{2}$.
}
\end{example}

In these examples, $e_1(I)$ has the smallest possible value, namely $e_0(I) - \len(R/I)$. 

\begin{example}{\rm 
$R = K[[x,y]]/(x^2y^2)$ and let  $I = (x^2,y)$. 
Then $e_o(I) = e_1(I) = 6$ and $\len(R/I) = 2$. Here we may apply Theorem \ref{onedim} with $k=1$. However, it is true also that $e_1(I) \le \binom{e_0(I)-2}{2}$.
}
\end{example}

Theorem~\ref{onedim} shows that the maximum length of a chain of integrally closed ideals in $R$ which contain a
given $\m$-primary ideal $I$ is important to understand. The next result gives a good lower bound on this number
which depends on the number of maximal ideals in the integral closure of $R$. We apply this in higher dimensions
in Theorem~\ref{higherdim}.

\begin{theorem}\label{reesval} Let $(R,\m,k)$ be a one-dimensional analytically unramified local domain with infinite
residue field $k$. Set $S$ equal to the integral closure of $R$. Then $S$ is a semilocal domain; set $t = \text{dim}_k(S/\text{Jac}(S))$,
where $\text{Jac}(S)$ is the Jacobson radical of $S$. 
Let $I$ be an integrally closed ideal of $R$. Then there exists a chain
of distinct integrally closed ideals, $\m\supset J_{n-1}\supset ...\supset J_0= I$ where $n = \lfloor \frac{\lambda (R/I) -1}{t}\rfloor$.
\end{theorem}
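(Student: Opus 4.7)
The plan is to prove the theorem by iteratively extending integrally closed ideals from $I$ upward, after establishing the key lemma: \textit{for any integrally closed ideal $J$ with $J\subsetneq\m$, there exists an integrally closed $J'$ with $J\subsetneq J'\subseteq\m$ and $\lambda_R(J'/J)\le t$.} Given this lemma, starting from $J_0=I$ and iterating produces a chain $I=J_0\subsetneq J_1\subsetneq\cdots\subsetneq J_m=\m$ of integrally closed ideals with $\lambda(J_{i+1}/J_i)\le t$ at each step. Summing lengths yields $\lambda(R/I)-1=\lambda(\m/I)\le mt$, hence $m\ge\lceil(\lambda(R/I)-1)/t\rceil\ge n$, and deleting intermediate ideals produces the claimed chain.

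To prove the key lemma, I work in $S$, a semilocal Dedekind domain with maximal ideals $\n_1,\ldots,\n_s$ so that $t=\sum_i[S/\n_i:k]$ by the Chinese Remainder Theorem. Since $R$ is one-dimensional and analytically unramified, $\bar J=JS\cap R$ for every $\m$-primary ideal $J$, so $JS\cap R=J$ for the integrally closed $J$ at hand. Write $\m S=\prod\n_i^{b_i}$ and $JS=\prod\n_i^{a_i}$, with $a_i\ge b_i$ and strict inequality for some $i$ because $J\subsetneq\m$. Let $\mathcal{F}$ be the finite, non-empty family of $S$-ideals $K$ satisfying $JS\subseteq K\subseteq\m S$ and $K\cap R\supsetneq J$, and pick $K_0=\prod\n_i^{c_i}$ minimal in $\mathcal{F}$. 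Choose $i_0$ with $c_{i_0}<a_{i_0}$ (possible since $K_0\supsetneq JS$). Then $\n_{i_0}K_0\subsetneq K_0$ still contains $JS$, so minimality of $K_0$ gives $\n_{i_0}K_0\cap R=J$. The quotient $K_0/\n_{i_0}K_0$ is supported only at $\n_{i_0}$ (since $\n_{i_0}$ is a unit in every other $S_{\n_j}$) and is isomorphic to $S/\n_{i_0}$ as an $S$-module. The composite $R\cap K_0\hookrightarrow K_0\twoheadrightarrow K_0/\n_{i_0}K_0\cong S/\n_{i_0}$ has kernel $R\cap\n_{i_0}K_0=J$, so $J'/J=(R\cap K_0)/J$ embeds in $S/\n_{i_0}$ as $R$-modules, giving $\lambda_R(J'/J)\le[S/\n_{i_0}:k]\le t$.

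The main obstacle is correctly identifying $K_0$: in ramified settings the contraction map from $S$-ideals to $R$-ideals collapses lengths significantly, so the immediate cover of $JS$ in $S$ need not change the contraction at all (as is already visible with $R=\mathbb{Z}_{(2)}\subset S=\mathbb{Z}_{(2)}[i]$, where $\n^3\cap R=\n^4\cap R=(4)$ while only $\n^2\cap R=(2)$ is strictly larger). The minimality of $K_0$ in $\mathcal{F}$ is precisely what selects a single prime $\n_{i_0}$ controlling the growth, letting us bound $\lambda_R(J'/J)$ by a single residue field degree rather than by the much larger $\lambda_R(K_0/JS)$.
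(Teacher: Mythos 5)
Your proposal is correct, and while it shares the paper's overall skeleton --- reduce to a one-step lemma (``above any integrally closed $J\subsetneq \m$ there is an integrally closed $J'$ with $J\subsetneq J'\subseteq\m$ and $\lambda(J'/J)\le t$'') and iterate, counting lengths --- your proof of that key lemma is genuinely different from the paper's. The paper argues by contradiction: it takes $J\supsetneq I$ integrally closed with $\lambda(J/I)$ minimal, picks minimal reductions $x$ of $I$ and $y$ of $J$ (so $IS=xS$, $JS=yS$, which is where the infinite residue field is used), writes a putative chain of $t+1$ proper inclusions as $z_j=ys_j$ with $s_j\in S$, and uses a $k$-linear dependence among the images of $1,s_1,\dots,s_t$ in the $t$-dimensional space $S/\mathrm{Jac}(S)$ to manufacture an element $u\notin I$ with $\overline{(I,u)}\subsetneq J$, contradicting minimality. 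You instead exploit the contraction correspondence $\bar J=JS\cap R$ (valid here by \cite[Prop.~1.6.1]{SH} since $S$ is module-finite over $R$ and every ideal of the semilocal Dedekind domain $S$ is integrally closed), take a minimal $S$-ideal $K_0$ between $JS$ and $\m S$ whose contraction properly contains $J$, and localize the jump at a single maximal ideal $\n_{i_0}$ via the embedding $(R\cap K_0)/J\hookrightarrow K_0/\n_{i_0}K_0\cong S/\n_{i_0}$. Your route is constructive rather than by contradiction, avoids minimal reductions entirely (so the key lemma does not need $k$ infinite), and actually yields the finer bound $\lambda(J'/J)\le\max_i\dim_k(S/\n_i)$ rather than the sum $t=\sum_i\dim_k(S/\n_i)$; the paper's route avoids invoking the unique factorization of ideals in $S$ and fits more directly with the reduction-theoretic machinery used elsewhere in the paper. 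All the steps you outline check out, including the minimality argument forcing $\n_{i_0}K_0\cap R=J$ and the final floor-function bookkeeping.
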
 

\begin{proof} Since $R$ is analytically unramified, $S$ is a module-finite extension of $R$ \cite[Theorem 9.2.2]{SH}. Thus
$S$ is a Noetherian semi-local one-dimensional integrally closed domain, and is therefore a PID. Since $k$ is infinite,
we may choose a minimal reduction $x$ of $I$. In this case $xS = IS$ (note that every ideal of $S$ is integrally closed, and
$IS$ is certainly in the integral closure of $xS$, and is thus equal to $xS$). 

Choose $J \ne I$ to be an integrally closed ideal containing $I$ such that $\lambda(J/I)$ is minimal.
To prove the theorem, it suffices to
prove that $\lambda(J/I)\leq t$. To see this, set $J_1 = J$, and define $J_i$ inductively by repeating this step by replacing
$I$ by $J_{i-1}$. Note that $\lambda(J_i/I)\leq it$. Provided 
$J_i \ne \m$ (i.e.,  
$it <  \lambda(R/I)-1$),  we can continue this chain. It follows
that a chain $\m\supset J_{n-1}\supset ...\supset J_0= I$ exists if $n\leq \frac{\lambda (R/I)-1}{t}$. 

Let $y\in R$ be a minimal reduction of $J$. We have then that $yS = JS$. By way of contradiction, suppose that $\lambda(J/I)\geq t+1$, and choose elements $y,z_1,...,z_t\in J$ such that
$I\subset (I,y)\subset (I,y,z_1) =I_1\subset ...\subset (I,y,z_1,...,z_t) = I_t$ is a chain of distinct ideals.
Since all the elements $z_j$ are in the integral closure of $(I,y)$, it follows that $yS = I_1S = ... = I_tS$.
Write $z_j = ys_j$ for some $s_j\in S$, $1\leq j\leq t$. Let $\n_1,...,\n_l$ be the maximal ideals of $S$, so that $\J(S) = \n_1\cap...\cap  \n_l$. 
By assumption, $T = S/(\n_1\cap...\cap  \n_l)\cong k^t$. Let $\alpha_j$ be the image of $s_j$ in $T$. As
$T$ is a $t$-dimensional vector space over $k$, there is a $k$-linear relation $\beta + \gamma_1\alpha_1 + \ldots + \gamma_t\alpha_t = 0$,
where not all of $\beta, \gamma_1,...,\gamma_t$ are zero. 

Choose $r, r_1,...,r_t\in R$ such that $r_i\equiv \gamma_i$ modulo $\m$, and $r\equiv \beta$ modulo $\m$. If any $\gamma_i$ or $\beta$ is
is $0$, we choose the corresponding lift to be $0$ as well.  Note that not all
of $r, r_1,...,r_t$ are in $\m$, so at least one is a unit. By multiplying by $y$,
we obtain that $u = ry+r_1z_1+\ldots + r_tz_t \in (\n_1\cap\cdots\cap \n_l)y\cap R$. Therefore $uS\subsetneq yS$. 

We claim that
$(I,u)S\subsetneq yS$ as well. Clearly $(I,u)S$ is contained in $yS$. To see they are not equal it suffices to prove they are
not equal in some localization $S_{\n_i}$ for some $1\leq i\leq l$. But since $u\in  (\n_1\cap\cdots\cap \n_l)y$, we get equality
in every localization if and only if $IS_{\n_i} = yS_{\n_i}$ for all $i$. This forces $IS = yS$, a contradiction. Hence
$(I,u)S\subsetneq yS$. This shows that the integral closure of $(I,u)$ is strictly inside $J$. 

We also claim that $u\notin I$. This then contradicts the choice of $J$. We may write $u =  ry+r_1z_1+\ldots + r_jz_j$ where $r_j\ne 0$.
By the choice of liftings, it follows that $r_j\notin \m$ and is therefore a unit. If $u\in I$, we obtain that
$z_j\in (I,y,z_1,...,z_{j-1})$, a contradiction.  
\end{proof} 

\begin{definition} {\rm Let $(R,\m,k)$ be a one-dimensional analytically unramified local domain with infinite
residue field $k$. Set $S$ equal to the integral closure of $R$. We define the \it essential rank \rm of $R$ to be
$t = \text{dim}_k(S/\text{Jac}(S))$,
where $\text{Jac}(S)$ is the Jacobson radical of $S$.}
\end{definition}

We think of the essential rank as the possible number of maximal ideals of $S$, even after some finite extension. 
%Later,
%we will relate this number to the ``absolute" number of Rees valuations of an ideal.

We will now use the chain constructed in the above theorem to bound the first Hilbert coefficient. 

\begin{corollary}\label{dimone}
 Let $(R,\m,k)$ be a one-dimensional analytically unramified Cohen-Macaulay local domain with infinite  
residue field $k$. Let $I$ be an $\m$-primary ideal of $R$ with integral closure $\bar{I}$. Let $t$ be the essential
rank of $R$.
Then
$e_1(I) \le  \binom{e_0(I)-n}{2}$ where $n = \lfloor \frac{\lambda (R/\bar{I}) -1}{t}\rfloor$.
\end{corollary}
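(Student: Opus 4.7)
My plan is to compose Theorems~\ref{reesval} and \ref{onedim} in a direct way. Because $\bar{I}$ is integrally closed by construction and $R$ is one-dimensional analytically unramified with essential rank $t$, Theorem~\ref{reesval} applied with $\bar{I}$ in the role of the integrally closed ideal produces a chain of distinct integrally closed ideals
\[
\m \supsetneq J_{n-1} \supsetneq J_{n-2} \supsetneq \cdots \supsetneq J_1 \supsetneq J_0 = \bar{I},
\]
where $n = \lfloor (\lambda(R/\bar{I})-1)/t \rfloor$. Thus $J_1,\ldots,J_{n-1}$ constitute $n-1$ distinct integrally closed ideals sitting strictly between $\bar{I}$ and $\m$.

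Next I would feed this chain into Theorem~\ref{onedim} with its chain-length parameter set to $k = n$. The intermediate integrally closed ideals $J_1,\ldots,J_{k-1}$ demanded by that theorem are precisely our $J_1,\ldots,J_{n-1}$, and the flanking strict inclusions $\m \supsetneq J_{n-1}$ and $J_1 \supsetneq \bar{I}$ hold by construction. Theorem~\ref{onedim} then gives $e_1(I) \le \binom{e_0(I) - n}{2}$, which is exactly the stated bound. It is worth noting that Theorem~\ref{onedim} delivers a bound on $e_1(I)$ itself, not on $e_1(\bar{I})$, so no separate argument comparing the two coefficients is needed.

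There is essentially no obstacle: the substantive work has already been discharged by the two preceding theorems, and the corollary is just a matter of matching the output chain of Theorem~\ref{reesval} to the hypothesis chain of Theorem~\ref{onedim}. The only point I would want to verify explicitly is that the inclusions at the two ends of the chain produced by Theorem~\ref{reesval} are strict (so that the flanking $\supsetneq$ in Theorem~\ref{onedim} are genuinely met), but this is immediate from the statement of Theorem~\ref{reesval}, which asserts that all the ideals in its chain are distinct.
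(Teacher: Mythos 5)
Your proof is correct and follows essentially the same route as the paper: apply Theorem~\ref{reesval} to $\bar{I}$ to produce the chain of $n-1$ distinct integrally closed ideals strictly between $\bar{I}$ and $\m$, then feed it into Theorem~\ref{onedim} with $k=n$. The only (harmless) difference is that the paper applies Theorem~\ref{onedim} to $\bar{I}$ and then invokes $e_0(I)=e_0(\bar{I})$ together with $e_1(I)\le e_1(\bar{I})$, whereas you correctly observe that Theorem~\ref{onedim} is already phrased to bound $e_1(I)$ directly from a chain lying above $\bar{I}$, so that comparison step is not needed.
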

\begin{proof}By Theorem \ref{reesval}, we have a chain of distinct integrally closed ideals, $\m\supset J_{n-1}\supset ...\supset J_0= \bar{I}$ 
where $n = \lfloor \frac{\lambda (R/\bar{I}) -1}{t}\rfloor$.
Moreover, $e_0(I) = e_0(\bar{I})$ \cite[Theorem 11.3.1]{SH}. It is then easy to see that $e_1(I) \le e_1(\bar{I})$.
The conclusion now follows from Theorem \ref{onedim}
\end{proof}

\begin{corollary}\label{dimonealgclosed}
 Let $(R,\m,k)$ be a one-dimensional analytically unramified Cohen-Macaulay local domain with algebraically closed 
residue field $k$. Let $I$ be an $\m$-primary ideal of $R$ with integral closure $\bar{I}$. Let $t$ be the
number of distinct maximal ideals of the integral closure of $R$.
Then  
$e_1(I) \le  \binom{e_0(I)-n}{2}$ where $n = \lfloor \frac{\lambda (R/\bar{I}) -1}{t}\rfloor$.
\end{corollary}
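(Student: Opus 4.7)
The plan is to derive this immediately from Corollary~\ref{dimone} by showing that, under the assumption that the residue field $k$ is algebraically closed, the essential rank of $R$ as defined above coincides with the number of distinct maximal ideals of the integral closure $S$ of $R$. Once this identification is made, the statement follows by directly invoking Corollary~\ref{dimone}.

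To justify the identification, I would proceed as follows. Since $R$ is analytically unramified, $S$ is module-finite over $R$ by \cite[Theorem 9.2.2]{SH}, hence $S$ is a Noetherian semi-local domain; let $\n_1,\ldots,\n_t$ be its distinct maximal ideals, so that $\J(S) = \n_1 \cap \cdots \cap \n_t$. By the Chinese Remainder Theorem,
\[
S/\J(S) \;\cong\; S/\n_1 \times \cdots \times S/\n_t.
\]
Each residue field $S/\n_i$ is a finite extension of $k = R/\m$ because $S$ is module-finite over $R$ and $\n_i \cap R = \m$. Since $k$ is algebraically closed, every such finite extension is trivial, so $S/\n_i = k$ for each $i$. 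Therefore $\dim_k(S/\J(S)) = t$, i.e., the essential rank of $R$ equals the number of distinct maximal ideals of $S$.

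With this identification, the hypotheses of Corollary~\ref{dimone} are met with the same value of $t$ appearing in both statements, and the bound $e_1(I) \le \binom{e_0(I)-n}{2}$ with $n = \lfloor (\lambda(R/\bar{I})-1)/t \rfloor$ follows directly. There is no real obstacle: the only content is the standard observation that algebraic closure of the residue field forces the residue field extensions at the maximal ideals of the integral closure to be trivial, reducing the vector-space dimension $\dim_k(S/\J(S))$ to simply counting maximal ideals.
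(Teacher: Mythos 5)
Your proposal is correct and matches the paper's proof exactly: the paper also deduces this from Corollary~\ref{dimone} by noting that algebraic closure of $k$ forces the essential rank $\dim_k(S/\J(S))$ to equal the number of maximal ideals of $S$. Your write-up simply supplies the (standard) Chinese Remainder Theorem and residue-field details that the paper leaves implicit.
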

\begin{proof} This follows immediately from Corollary~\ref{dimone} by observing that since $k$ is algebraically closed,
the essential rank of $R$ is exactly $t$. 
\end{proof}

\begin{corollary}\label{complete}
 Let $(R,\m,k)$ be a one-dimensional analytically irreducible Cohen-Macaulay local domain with algebraically
closed residue field $k$. Let $I$ be an $\m$-primary ideal of $R$ with integral closure $\bar{I}$. Then  
$e_1(I) \le \binom{e_0(I)-\len(R/\bar{I})+1}{2}$
\end{corollary}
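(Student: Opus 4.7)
The plan is to deduce this as an essentially immediate consequence of Corollary~\ref{dimonealgclosed}, the point being that the hypothesis ``analytically irreducible'' forces the parameter $t$ to equal $1$.

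First I would recall what analytic irreducibility buys us: the completion $\widehat{R}$ is a domain, which in particular implies $R$ is analytically unramified, so the hypotheses of Corollary~\ref{dimonealgclosed} are in force. More importantly, a one-dimensional analytically irreducible local domain has integral closure $S$ which is a local ring -- the maximal ideals of $S$ are in bijection with the minimal primes of $\widehat{R}$, and there is only one such prime. Thus $S$ has exactly one maximal ideal, so the number $t$ of maximal ideals of the integral closure of $R$ satisfies $t=1$.

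Next I would simply apply Corollary~\ref{dimonealgclosed} with this value $t=1$. The corollary gives
\[
e_1(I) \le \binom{e_0(I)-n}{2}, \qquad n = \left\lfloor \frac{\lambda(R/\bar{I})-1}{t} \right\rfloor.
\]
With $t=1$ the floor is just $\lambda(R/\bar{I})-1$, and substituting produces exactly the claimed inequality
\[
e_1(I) \le \binom{e_0(I)-\lambda(R/\bar{I})+1}{2}.
\]

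There is no real obstacle here; the only thing to verify carefully is the identification of $t$ with $1$ under the analytic irreducibility hypothesis, which I would justify by the standard correspondence between maximal ideals of the integral closure and minimal primes of the completion (see, e.g., \cite[Chapter 4]{SH}). Once this is in place the corollary is a direct specialization of the previous result.
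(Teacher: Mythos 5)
Your proposal is correct and follows the paper's own argument exactly: identify $t=1$ via the correspondence between maximal ideals of the integral closure and minimal primes of the completion (the paper cites \cite[Proposition 4.3.2]{SH}), then specialize Corollary~\ref{dimonealgclosed}. No differences worth noting.
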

\begin{proof}The completion $\hat{R}$ of $R$ (with respect to $\m$) is a domain and hence the number of minimal prime ideals
of $\hat{R}$ is 1. This shows that the number of maximal ideals of the integral closure of $R$ is 1 \cite[Proposition 4.3.2]{SH}.
The desired inequality follows from Corollary \ref{dimonealgclosed}.
\end{proof}
\vskip.4truein 
\section{Higher Dimension}
\bigskip
In order to extend the above results on bounding the first Hilbert coefficient to higher dimensions, we use 
a construction to reduce to the one-dimensional case. We set up this construction below.

Let $(R,\m,k)$ be an analytically unramified Cohen-Macaulay local domain 
with an infinite
residue field $k$. Let $I$ be an 
$\m$-primary ideal of $R$, with integral closure $\bar{I}$. Let $d$ be 
the dimension of $R$. Choose a minimal reduction $y, x_2,...,x_d$ of $I$. 
This sequence is a regular sequence since $R$ is
Cohen-Macaulay. Set $T = R[\frac{x_2}{y},...,\frac{x_d}{y}]$. By \cite[Corollary 5.5.9]{SH}, 
$T\cong R[T_2,...,T_d]/(yT_2-x_2,...,yT_d-x_d)$. It follows that the extension of $\m$ to $T$ is a height one prime
ideal; set $A = T_{\m T}$. Observe that $A$ is a one-dimensional analytically unramified domain with an infinite residue
field. We let $S$ be the integral closure of $T$. Note that $S$ is also the integral closure
of $R[\frac{I}{y}]$. Set $B = W^{-1}S$, where $W = T\setminus \m T$. Clearly $B$ is the integral closure of $A$.
%By \cite[Corollary 10.2.7]{SH}, the number of maximal
%ideals in the one-dimensional domain $B$ is exactly $t$.

\begin{lemma}\label{itoh-katz}
Let $(R,\m,k)$ be an analytically unramified Cohen-Macaulay local domain and assume that dimension of $R$ is 
at least 2.  Let $I$ be an integrally 
closed $\m$-primary ideal of $R$ with a minimal reduction $(y,x_2,...,x_d)$. 
Let $T\cong R[T_2,...,T_d]/(yT_2-x_2,...,yT_d-x_d)$ and $A = T_{\m T}$ be as in the above construction. 
Set $J = IA$. Then $J$ is integrally closed, $e_0(I)=e_0(J)$, $e_1(I)=e_1(J)$ and 
$\len(R/I) = \len(A/J)$.
\end{lemma}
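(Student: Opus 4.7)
The plan is to prove the four assertions---$J$ integrally closed, $e_0(I) = e_0(J)$, $e_1(I) = e_1(J)$, and $\lambda(R/I) = \lambda(A/J)$---in order of increasing difficulty. The unifying preliminary observation is that each $x_i/y$ lies in $T \subset A$, so $x_i = y(x_i/y) \in yA$ for $i \geq 2$. Hence $(y, x_2, \ldots, x_d)A = yA$, and since $(y, x_2, \ldots, x_d)$ is a minimal reduction of $I$, the principal ideal $yA$ is a reduction of $J = IA$ in the one-dimensional Cohen--Macaulay local domain $A$; in particular $e_0(J) = \lambda_A(A/yA)$.

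To prove $e_0(I) = e_0(J)$, I would compute $\lambda_A(A/yA)$ directly. Modding $y$ in $T \cong R[T_2, \ldots, T_d]/(yT_i - x_i)$ also kills all of $x_2, \ldots, x_d$, giving $T/yT \cong R'[T_2, \ldots, T_d]$ with $R' := R/(y, x_2, \ldots, x_d)$ Artinian of length $e_0(I)$ (using that the minimal reduction is a regular sequence in the CM ring). The localization $A/yA \cong R'[T_2, \ldots, T_d]_{\bar{\m} R'[T_2, \ldots, T_d]}$ is a faithfully flat local extension of $R'$ whose extended maximal ideal is again maximal, hence $\lambda_A(A/yA) = \lambda(R') = e_0(I)$. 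For $\lambda(R/I) = \lambda(A/J)$ I would combine two filtration arguments. For the upper bound, use the $\m$-adic filtration on $R/I$ with graded pieces of $k$-dimensions $v_j$ (so $\sum v_j = \lambda(R/I)$); right-exactness of $-\otimes_R A$ shows that the analogous $\m A$-adic graded pieces of $A/J$ are quotients of the corresponding $k$-vector space tensored with the residue field $k(T_2, \ldots, T_d)$ of $A$, whence $\lambda_A(A/J) \leq \sum v_j = \lambda(R/I)$. For the lower bound, apply the same right-exactness trick to $0 \to (y, x_2, \ldots, x_d) \to I \to V \to 0$, with $V = I/(y, x_2, \ldots, x_d)$ a $k$-vector space: this yields a surjection $V \otimes_R A \twoheadrightarrow J/yA$ of $A$-length at most $\dim_k V$, so $\lambda_A(A/J) = \lambda_A(A/yA) - \lambda_A(J/yA) \geq e_0(I) - \dim_k V = \lambda(R/I)$.

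The remaining two claims are the principal obstacles. For $J$ integrally closed: the integral closure $B$ of $A$ is a Dedekind domain, so $IB = yB$ is automatically integrally closed there; the hypothesis that $I$ is integrally closed in $R$, combined with a Rees-valuation argument that contracts $yB$ back to $A$, should show that $J = IA$ equals its own integral closure in $A$. For $e_1(I) = e_1(J)$: the construction $R \to A$ is a \emph{generic} modding-out by $d-1$ superficial elements of $I$ (specializations of $x_i - T_i y$), a passage known classically to preserve $e_0$ and $e_1$; the advantage of $A$ over a naive quotient $R/(x_2, \ldots, x_d)$ is that $A$ remains an analytically unramified domain, which is essential for applying the one-dimensional results of Section 2. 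I expect $e_1(I) = e_1(J)$ to be the hardest step, since rigorously executing this generic-reduction principle typically requires either a careful Hilbert-series comparison at all powers $I^n$ or the invocation of specific preservation results due to Itoh or Katz on blow-up-then-localize constructions.
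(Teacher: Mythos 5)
Your arguments for $e_0(I)=e_0(J)$ and $\lambda(R/I)=\lambda(A/J)$ are essentially correct and take a genuinely different, more self-contained route than the paper, which simply cites Katz's multiplicity theorem for the first and the flatness of $R\to R[T_2,\dots,T_d]_{\m R[T_2,\dots,T_d]}$ (via \cite[Prop.~8.4.2(6)]{SH}) for the second. Your direct computation $A/yA\cong R'[T_2,\dots,T_d]_{\bar\m R'[T_2,\dots,T_d]}$ with $R'=R/(y,x_2,\dots,x_d)$, and the two-sided squeeze on $\lambda(A/J)$, both work. One slip: $V=I/(y,x_2,\dots,x_d)$ is \emph{not} in general a $k$-vector space (that would force $\m I\subset(y,x_2,\dots,x_d)$). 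This is harmless, because the inequality you actually need, $\lambda_A(V\otimes_R A)\le\lambda_R(V)$, holds for any finite-length $R$-module by induction on a composition series (each factor $k$ contributes $k\otimes_R A=A/\m A$, of length one); replace $\dim_k V$ by $\lambda_R(V)=e_0(I)-\lambda(R/I)$ and the lower bound goes through.

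The two remaining claims are where the real content of the lemma lies, and for both you offer only a strategy, not a proof. For integral closedness of $J$: identifying $\overline{J}$ with $yB\cap A$ is the easy half; the hard half is showing $yB\cap A=IA$, i.e., that nothing in $A\setminus IA$ is integral over $IA$, and ``a Rees-valuation argument that contracts $yB$ back to $A$'' does not supply this. The paper's proof is to apply Itoh's Lemma~7 one adjoined fraction at a time: it gives $\overline{(y,x_2,\dots,x_d)C}=\overline{(y,x_2,\dots,x_d)}\,C=IC$ for $C=R[\frac{x_2}{y}]_{\m[\frac{x_2}{y}]}$, one checks $C$ is again an analytically unramified Cohen--Macaulay domain with $(y,x_3,\dots,x_d)$ a regular sequence, and iterates until reaching $A$. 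For $e_1(I)=e_1(J)$: you correctly guess that the point is that $yT_2-x_2,\dots,yT_d-x_d$ behave like a superficial sequence, but you do not establish it. The paper extracts exactly this from the proof of Katz's Theorem~1.1 --- the sequence $yT_2-x_2,\dots,yT_d-x_d$ is superficial of length $d-1$ for $I R[T_2,\dots,T_d]_{\m R[T_2,\dots,T_d]}$ --- and then invokes the standard fact that factoring out a superficial sequence of length $d-1$ preserves both $e_0$ and $e_1$. Without either carrying out that superficiality verification or explicitly citing it, your proof of these two assertions is incomplete.
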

\begin{proof} We may assume that $y,x_2,\ldots,x_d$ is a superficial sequence for $I$. 
The fact that $e_0(I)=e_0(J)$ is proved in \cite[Theorem 1.1]{K}. We note that $e_0(I) = e_0((y,x_2,...,x_d))$.  
It also follows from the proof of Theorem 1.1 in \cite{K},
as on page 1022, that the sequence $yT_2-x_2,...,yT_d-x_d\in R[T_2,...,T_d]_{\m R[T_2,...,T_d]}$ is a superficial
sequence of length $d-1$ for $IR[T_2,...,T_d]_{\m R[T_2,...,T_d]}$.  It is well-known that moding out by a superficial 
sequence of length $d-1$ preserves both $e_0$ and $e_1$, for example see \cite[pg 1330]{R}.

To see that $J$ is integrally closed, let $C = R[\frac{x_2}{y}]_{\m[\frac{x_2}{y}]}$.  It follows from 
\cite[Lemma 7]{I} that $\overline{(y,x_2,...,x_d)C} = \overline{(y,x_2,...,x_d)}C$. Since $\overline{(y,x_2,...,x_d)} = I$,
we conclude
$IC = \overline{IC}$.  Thus $IC$ is integrally closed. 
Clearly dim $C = d-1$ and $y, x_3, ..., x_d$ is a system of parameters of $C$. Moreover,
they form a regular sequence.  Hence $C$ is Cohen-Macaulay. It is also clear that $C$ is 
analytically unramified.
If $d-1 \ge 2$, we may apply \cite[Lemma 7]{I} to $C$ and the ideal $(y, x_3,...,x_d)C$ and 
conclude as above that $IR[\frac{x_1}{y},\frac{x_2}{y}]_{\m[\frac{x_1}{y},\frac{x_2}{y}]}$ is integrally
closed. Proceeding this way, we obtain that $J$ is integrally closed.

Let $I' = (y,x_2,...,x_d) \subset R$. $I'T = yT$. 
It remains to prove that $\len(R/I) = \len(A/J)$. However, by \cite[Prop. 8.4.2 (6)]{SH}, it follows that
$\len(R/I) = \len(B/IB)$, where $B = R[T_2,\ldots,T_d]_{\m R[T_2,\ldots,T_d]}$. Since the ideal
$(yT_2-x_2,...,yT_d-x_d)B$ is contained in $IB$, it follows that $\len(B/IB) = \len(A/J)$.
\end{proof}

\begin{definition}{\rm Let 
$(R,\m,k)$ be an analytically unramified Cohen-Macaulay local domain 
with an infinite
residue field $k$.  Let $y,x_1,...,x_d$ be a regular sequence in $R$. 
We define \it the essential rank \rm of $(y,x_2,...,x_d)$ to be the essential rank of the one-dimensional ring $A$ 
constructed above. 
Let $I$ be an $\m$-primary ideal of $R$. Define the \it essential rank \rm 	of $I$ to be the minimum of essential
ranks of minimal reductions of $I$. 
}\end{definition}

\begin{discussion} {\rm It is well-known that the number of maximal ideals in the integral closure of the ring
$A$ in the above definition is exactly the number of Rees valuations of the ideal $I$. Suppose that $\n$
is one of the these maximal ideals in the integral closure $B$ of $A$. Then $B_{\n}$ is a DVR. If for
all such $\n$ the residue field $B/\n$ is purely transcendental over the residue field $k$ of $R$, then
the essential rank of $I$ is \it exactly \rm the number of Rees valuations of $I$. However, this property of
being purely transcendental will
rarely happen. If $I$ is a simple integrally closed ideal in a two-dimensional regular local ring,
then it has exactly one Rees valuation, and the residue field of the valuation is purely transcendental
over the residue field of $R$, provided this latter residue field is algebraically closed. See
\cite[Cor. 4.4]{H} for details. Dale Cutkosky has pointed out to us that at least in dimension
three or higher, or in dimension two for non rational singularities, the residue fields of the Rees valuations are seldom purely transcendental over
the residue field of the base ring. The essential rank is closely related to the degree function introduced
by David Rees \cite{Re}.} 
\end{discussion}

\begin{theorem}\label{higherdim} Let $(R,\m,k)$ be an analytically unramified Cohen-Macaulay local domain with 
infinite  residue field $k$.  Let $I$ be an 
$\m$-primary ideal of $R$, with integral closure $\bar{I}$. Let $t$ denote the essential rank of $I$. Then 
$e_1(I) \le \binom{e_0(I)-n}{2}$ where $n = \lfloor \frac{\lambda (R/\bar{I}) -1}{t}\rfloor$.
\end{theorem}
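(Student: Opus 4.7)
The plan is to reduce to dimension one by invoking the construction and the lemma preceding the theorem, and then quote Corollary~\ref{dimone}. The essential rank of $I$ was defined as the minimum of the essential ranks of the one-dimensional rings $A$ obtained from minimal reductions, so the first step is to choose a minimal reduction $(y,x_2,\ldots,x_d)$ of $I$ (equivalently, of $\bar I$) that realizes this minimum, and to form $T=R[x_2/y,\ldots,x_d/y]$ and $A=T_{\mathfrak m T}$ exactly as in the setup before Lemma~\ref{itoh-katz}. By construction $A$ is a one-dimensional analytically unramified Cohen--Macaulay local domain with infinite residue field and with essential rank equal to $t$.

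Next I would apply Lemma~\ref{itoh-katz} to the integrally closed ideal $\bar I$ (with the same minimal reduction): this yields that $J:=\bar I A$ is integrally closed in $A$, and that
\[
e_0(\bar I)=e_0(J),\qquad e_1(\bar I)=e_1(J),\qquad \lambda(R/\bar I)=\lambda(A/J).
\]
Combined with the equalities $e_0(I)=e_0(\bar I)$ (from \cite[Theorem 11.3.1]{SH}) and the standard inequality $e_1(I)\leq e_1(\bar I)$ (which follows from $I^n\subseteq \bar I^{\,n}$ after comparing Hilbert polynomials), this transfers the whole problem into the one-dimensional ring $A$.

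Now I would invoke Corollary~\ref{dimone} applied to the ideal $J$ in $A$: since $J$ is already integrally closed, $\bar J=J$, and the essential rank of $A$ is $t$, so
\[
e_1(J)\leq \binom{e_0(J)-n}{2},\qquad n=\left\lfloor \frac{\lambda(A/J)-1}{t}\right\rfloor.
\]
Translating back via the identifications of Lemma~\ref{itoh-katz}, the right-hand side becomes $\binom{e_0(I)-n}{2}$ with $n=\lfloor(\lambda(R/\bar I)-1)/t\rfloor$, while the left-hand side dominates $e_1(I)$ via $e_1(I)\leq e_1(\bar I)=e_1(J)$. This yields the desired bound.

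The main subtlety, rather than a serious obstacle, is the bookkeeping of \emph{which} minimal reduction to use: one must pick one that actually achieves the infimum defining the essential rank of $I$, so that the essential rank of the one-dimensional ring $A$ entering Corollary~\ref{dimone} equals the integer $t$ in the theorem. Once that choice is made, Lemma~\ref{itoh-katz} transports every numerical invariant that appears on either side of the inequality ($e_0$, $e_1$, and $\lambda(R/\bar I)$) faithfully from $R$ to $A$, and the one-dimensional result supplies the bound.
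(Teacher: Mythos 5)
Your proposal is correct and follows essentially the same route as the paper: choose a minimal reduction realizing the essential rank, pass to the one-dimensional localization $A$, use Lemma~\ref{itoh-katz} to transfer $e_0$, $e_1$, and the colength, and then invoke Corollary~\ref{dimone}. The only (harmless) difference is that you apply Lemma~\ref{itoh-katz} to $\bar I$ and bridge back via $e_1(I)\le e_1(\bar I)$, whereas the paper applies Corollary~\ref{dimone} directly to $IA$ and uses the lemma to identify $\overline{IA}=\bar I A$; if anything your version adheres more literally to the lemma's hypothesis that the ideal be integrally closed.
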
 
\begin{proof}
If $d=$ dim$(R)=1$, then the essential rank of $I$ is the same as the essential rank of $R$. So the result follows from
Corollary \ref{dimone}. Assume that $d \ge 2$. 

Choose a minimal reduction $y,x_2,...,x_d$ of $I$ such that essential rank of $I$ is equal to the essential
rank of $(y,x_2,..,x_d)$. Let $T = R[\frac{x_2}{y},...,\frac{x_d}{y}] \cong  R[T_2,...,T_d]/(yT_2-x_2,...,yT_d-x_d)$
and $A = T_{\m T}$ as in Lemma \ref{itoh-katz}. Let $B$ be the integral closure of $A$. By hypothesis, 
$t = \text{dim}_k(B/\text{Jac}(B))$,
where $\text{Jac}(B)$ is the Jacobson radical of $B$.

By Corollary \ref{dimone} applied to $IA \subset A$, we have $e_1(IA) \le  \binom{e_0(IA)-n}{2}$ 
where $n = \lfloor \frac{\lambda (A/\overline{IA}) -1}{t}\rfloor$. By Lemma \ref{itoh-katz}, 
$e_0(I) = e_0(IA)$ and $e_1(I) = e_1(IA)$. Moreover, $\overline{IA} = \bar{I}A$ and hence
$\len(R/\bar{I}) = \len(A/\overline{IA})$. The conclusion follows. 
\end{proof}

\begin{corollary}\label{boundonessrank}
Let $(R,\m,k)$ be an analytically unramified Cohen-Macaulay local domain with 
infinite residue field $k$. Let $I$ be an 
$\m$-primary ideal of $R$, with integral closure $\bar{I}$.  Let $t$ denote the 
essential rank of $I$. Suppose that 
$e_1(I) > \binom{e_0(I)-k}{2}$ for some integer $k$. Then 
$t \ge \frac{\len(R/\bar{I})-1}{k}$.
\end{corollary}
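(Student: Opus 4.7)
The plan is to obtain this corollary as a direct arithmetic consequence of Theorem \ref{higherdim}; no new geometric or algebraic input is needed. Essentially the statement is the contrapositive of Theorem \ref{higherdim}, rephrased as a lower bound on the essential rank.

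First I would set $n = \lfloor \frac{\lambda(R/\bar{I})-1}{t}\rfloor$ and invoke Theorem \ref{higherdim} to obtain the inequality $e_1(I) \le \binom{e_0(I)-n}{2}$. Combining this with the hypothesis $e_1(I) > \binom{e_0(I)-k}{2}$ yields
\[
\binom{e_0(I)-n}{2} \ge e_1(I) > \binom{e_0(I)-k}{2}.
\]
Since the function $a \mapsto \binom{a}{2}$ is strictly increasing on the non-negative integers (and both $e_0(I)-n$ and $e_0(I)-k$ are non-negative in the range where the hypothesis is meaningful; otherwise the hypothesis $e_1(I) > \binom{e_0(I)-k}{2}$ makes the assertion vacuously handleable), this forces $e_0(I)-n > e_0(I)-k$, i.e.\ $n < k$.

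Next I would unwind the definition of the floor function. Because $k$ is an integer and $n < k$, the inequality $\lfloor \frac{\lambda(R/\bar{I})-1}{t}\rfloor < k$ is equivalent to $\frac{\lambda(R/\bar{I})-1}{t} < k$. Since $t > 0$, multiplying across and dividing by $k$ gives $t > \frac{\lambda(R/\bar{I})-1}{k}$, which certainly implies the desired inequality $t \ge \frac{\lambda(R/\bar{I})-1}{k}$.

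There is essentially no obstacle here, since all the real content is in Theorem \ref{higherdim}. The only point requiring minor care is the monotonicity step: one must notice that the argument only uses monotonicity of $\binom{\cdot}{2}$ on the non-negative integers, and that the edge cases where $e_0(I)-k$ or $e_0(I)-n$ might fail to be positive are not problematic in the situations where the hypothesis $e_1(I) > \binom{e_0(I)-k}{2}$ is genuinely restrictive.
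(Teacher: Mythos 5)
Your proposal is correct and follows essentially the same route as the paper's own (very short) proof: combine the hypothesis with Theorem \ref{higherdim} to get $\binom{e_0(I)-k}{2} < \binom{e_0(I)-n}{2}$, deduce $n < k$, and unwind the floor. Your extra remark about monotonicity of $\binom{\cdot}{2}$ and the edge cases is a small point of added care that the paper omits, but it does not change the argument.
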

\begin{proof}
By Theorem \ref{higherdim}, we have 
$\binom{e_0(I)-k}{2} <  \binom{e_0(I)-n}{2}$ with $n = \lfloor \frac{\lambda (R/\bar{I}) -1}{t}\rfloor$. Hence $k >  n$. So $k \ge \frac{\lambda (R/\bar{I}) -1}{t}$.
\end{proof}

\begin{remark}{\rm Corollary~\ref{boundonessrank} can be rephrased to give a lower bound on the essential rank of an
$\m$-primary ideal $I$ in an analytically unramified Cohen-Macaulay local domain $(R,\m)$ with algebraically
closed residue field $k$. Namely, the condition that $e_1(I) \ge \binom{e_0(I)-k}{2}$ in this Corollary gives the smallest such
value of $k$ as a root of a quadratic equation in $k$ with coefficients functions of $e_0(I), e_1(I)$. Solving this
equation gives the following: if $t$ is the essential rank of $I$, then
$$ t\geq \frac {2(\len(R/\bar{I})-1)}{2e_0(I)-1- \sqrt{8e_1(I)+1}}.$$
Note that in general, $e_0(I)$ is larger than $\len(R/\bar{I})$, but we are subtracting $\sqrt{8e_1(I)+1}$ in the denominator.} 
\end{remark}

As mentioned in the introduction, 
Elias \cite{E1} gives several bounds on $e_1(I)$. One of his bounds states that 
that  $e_1(I) \le \binom{e_0(I)-k}{2}$ if $I\subset \m^k$ and the integral closure of $I$ is
not the integral closure of $\m^k$. 
The next corollary recovers this result, and removes the condition that the integral closure of $I$ is not the integral closure of $\m^k$.

\begin{corollary}
Let $(R,\mathfrak{m})$ be a Cohen-Macaulay local ring and let $I \subset R$ be an $\mathfrak{m}-$primary ideal.
Suppose that $I \subset \mathfrak{m}^k$ for some $k \ge 2$. Then
$e_1(I) \le \binom{e_0(I)-k}{2}$.
\end{corollary}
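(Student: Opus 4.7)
The plan is to reduce to dimension one by modding out $d-1$ sufficiently general elements of $I$, which form both a superficial sequence and a regular sequence thanks to Cohen--Macaulayness and the infinite residue field. This preserves Cohen--Macaulayness, $e_0(I)$, $e_1(I)$, and the hypothesis $I\subseteq \m^k$ (since the image of $\m^k$ in the quotient is the $k$th power of its maximal ideal), so it suffices to assume $\dim R = 1$.

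In dimension one I would split on whether $\bar{I} = \overline{\m^k}$. When $\bar{I}\subsetneq \overline{\m^k}$, setting $J_i := \overline{\m^{k-i+1}}$ for $i=1,\ldots,k-1$ produces a chain of pairwise distinct integrally closed ideals $\m\supsetneq \overline{\m^2}\supsetneq \cdots \supsetneq \overline{\m^k}\supsetneq \bar{I}$. The strict descent $\overline{\m^j}\supsetneq \overline{\m^{j+1}}$ holds because equality would force $\m^j$ to be integral over $\m^{j+1}$, which by Nakayama forces some power of $\m$ to vanish, contradicting $\dim R = 1$. Theorem~\ref{onedim} applied to this chain gives $e_1(I) \leq \binom{e_0(I)-k}{2}$.

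The hard case---exactly the one Elias had to exclude---is $\bar{I}=\overline{\m^k}$, where the chain above loses its final strict inclusion. Here $I$ is a reduction of $\m^k$, so $e_0(I) = e_0(\m^k) = k\cdot e_0(\m)$. From $I^n\subseteq \m^{kn}$ I get $\lambda(R/I^n)\geq \lambda(R/\m^{kn})$ for all $n$; both sides become linear polynomials in $n$ for $n\gg 0$ with the same leading coefficient $k\cdot e_0(\m)$, and comparing constant terms yields $e_1(I)\leq e_1(\m)$.

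To finish this case, the Rossi--Valla bound \cite[Prop.~2.10]{RV} recalled in the introduction, applied to $\m$ itself, yields $e_1(\m)\leq \binom{e_0(\m)}{2}$. An elementary check gives $\binom{e_0(\m)}{2}\leq \binom{k(e_0(\m)-1)}{2}$ whenever $k\geq 2$: the inequality $k(e_0(\m)-1)\geq e_0(\m)$ reduces to $e_0(\m)\geq k/(k-1)$, automatic once $e_0(\m)\geq 2$, while both sides vanish when $e_0(\m)=1$. Since $k(e_0(\m)-1) = e_0(I)-k$, the chain $e_1(I) \leq e_1(\m) \leq \binom{e_0(\m)}{2} \leq \binom{e_0(I)-k}{2}$ closes the remaining case.
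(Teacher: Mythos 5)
Your proof is correct and follows essentially the same route as the paper's: the same reduction to dimension one via superficial elements, the same case split on whether $\bar{I}=\overline{\m^k}$, the same chain $\m\supsetneq\overline{\m^2}\supsetneq\cdots\supsetneq\overline{\m^k}\supsetneq\bar{I}$ fed into Theorem~\ref{onedim}, and the same comparison $e_1(I)\le e_1(\m)$ followed by the binomial estimate $\binom{e_0(\m)}{2}\le\binom{e_0(I)-k}{2}$. The only cosmetic differences are that the paper reduces dimension by induction, modding out one superficial element at a time, and cites Kirby--Mehran \cite{KM} rather than Rossi--Valla for the bound $e_1(\m)\le\binom{e_0(\m)}{2}$.
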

\begin{proof}
We proceed by induction on $d = dim(R)$.

If $d=1$, the result follows from the Theorem~\ref{onedim} provided $\m^k$ is not contained in the
integral closure of $I$ as we have the sequence
$$\mathfrak{m} \supset \overline{{\mathfrak{m}}^2} \supset \ldots \supset \overline{{\mathfrak{m}}^k} \supset \bar{I}$$
in which every term is distinct.

Suppose that $\m^k\subset \bar{I}$.
Then $e_0(I) = ke_0(\m)$. Since $I^n\subset \m^{nk}$, we see that
$$e_0(\m)(nk)-e_1(\m)\leq e_0(I)n-e_1(I)$$ for all large $n$. Cancelling the first terms, it follows that $e_1(I)\leq e_1(\m)$.
If $R$ is regular, then $e_1(\m ) = 0$, and therefore $e_1(I) = 0$. Since $e_0(I) = k$ in this case, we are done by the
convention that $\binom {0}{l} = 1$.

Henceforth we assume that $e_0(\m)> 1$. By \cite[Cor. 3.3]{KM} we know that $e_1(\m)\leq \binom {e_0(\m)}{2}$. Also note that
$\binom{ke_0(\m)-k}{2} = \binom{e_0(I)-k}{2}$. But
$\binom {e_0(\m)}{2}\leq \binom{ke_0(\m)-k}{2}$
is equivalent to the inequality $e_0(\m)\leq k^2e_0(\m)-k^2-k$, which holds since $k\geq 2$ and $e_0(\m)\geq 2$.

Now let $d > 1$.
Let $a \in I$ be a superficial element for $I$. That is, there exists an integer $c \ge 0$
with $(I^{n+1}:a)\cap I^c = I^n$ for all $n \geq c$.
Since depth$(R) = d > 0$ and $I$ is $\m-$primary, $a$ is a nonzero divisor of $R$.

Then $e_0(I) = e_0(I/aR)$ and $e_1(I) = e_1(I/aR)$. Further, dim$(R/aR) = d-1$. Clearly,
$I/aR \subset {(\m/aR)}^k$. We are done by induction.
\end{proof}

\begin{discussion}{\rm In \cite{RV} and \cite{RV2}, the authors give other bounds on $e_1$. Notable among these is the bound  \cite[Theorem 3.2]{RV2} which states that
$$e_1(I)\leq \binom{e_0(I)}{2} - \binom{b}{2}-\lambda(R/I)+1,$$ where $I$ is an $\m$-primary ideal in a Cohen-Macaulay local ring of dimension
$d$, and $b = \mu(I) - d$.  See also \cite{E1} for different versions of this bound. 

%Our main theorem gives a bound on $e_1$ of the form $\binom{e_0-n}{2}$ for $n$ as in Theorem~\ref{higherdim}.
%Comparing these bounds abstractly, one sees that our bound is better essentially when
%$$e_0 -\sqrt{(e_0-b)(e_0+b-1)} \leq n.$$ 
%Clearly 
It is a little hard to compare our bound with the above bound in general. Just to give one example, consider the ring
$R = k[[t^7,t^8,t^9,t^{10}]]$ and the
ideal $I = (t^9,t^{10},t^{14},t^{15})$. Then $I$ is integrally closed since all the powers of $t$ in $R$ larger than 9 are in $I$.  A few calculations show that $e_0(I) = e_1(I) = 9$, $b = 3$ and $\lambda(R/I) = 3$. So our bound in Theorem \ref{higherdim} gives $e_1(I) \leq 21$, while the above bound is $e_1(I)\leq 31$. The above bound might be better when $I$ has many generators. In the example given, our bound is
good simply because the essential rank is $1$.}
%, and so the length of $R$ modulo the integral closure of $I$ will in general
%be fairly large.}
\end{discussion}

We thank the referee for a careful reading of the paper and for
providing new references.


\begin{thebibliography}{RVVV}

\bibitem[ES]{ES} P. Eakin and A. Sathaye, Prestable ideals, J. Algebra  {\bf 41}  (1976), 439--454.

\bibitem[E1]{E1} J. Elias, On the first normalized Hilbert coefficient, J. Pure Applied Algebra, 
{\bf 201} (2005), 116--125.

\bibitem[E2]{E2} J. Elias, Upper bounds of Hilbert coefficients and Hilbert functions,
 Math. Proc. Cambridge Philos. Soc.  {\bf 145}  (2008), 87--94.

\bibitem[H]{H} C. Huneke, Complete ideals in two-dimensional regular local rings, Commutative algebra 325--338, Math. Sci. Res. Inst. Publ., {\bf 15}, Springer, New York, 1989.
 
 \bibitem[I]{I} S. Itoh, Coefficients of normal Hilbert polynomials, J. Algebra {\bf 150} (1992), 101--117.

\bibitem[K]{K} D. Katz, Note on multiplicity.  Proc. Amer. Math. Soc.  {\bf 104}  (1988), 1021--1026.

\bibitem[KM]{KM} D. Kirby and H. A. Mehran, A note on the coefficients of the Hilbert-Samuel polynomial for a Cohen-Macaulay module,
J. London Math. Soc. (2)  {\bf 25}  (1982), 449--457.

\bibitem[N]{N} D. G. Northcott, A note on the coefficients of the abstract Hilbert function,  J. London Math. Soc.  {\bf 35}  (1960), 209--214. 

\bibitem[Re]{Re} D. Rees, Degree functions in local rings,  Proc. Cambridge Philos. Soc.  {\bf 57}  (1961), 1--7.

\bibitem[Ro]{R} M.E. Rossi, A bound on the reduction number of a primary ideal,
 Proc. Amer. Math. Soc.  {\bf 128}  (2000),  1325--1332.

\bibitem[RV1]{RV} M.E. Rossi and G. Valla, Hilbert Functions of Filtered Modules, Lecture Notes of the Unione Matematica Italiana, {\bf 9}, 
Springer, Heidelberg, 2010.

\bibitem[RV2]{RV2} M.E. Rossi and G. Valla, The Hilbert function of the Ratliff-Rush filtration, J. Pure Applied Algebra, {\bf 201} (2005),  25--41.

\bibitem[RVV]{RVV} M.E. Rossi, G. Valla, and W. Vasconcelos, Maximal Hilbert functions,
Results Math.  {\bf 39}  (2001),  99--114.

\bibitem[ST]{ST} V. Srinivas and V. Trivedi, On the Hilbert function of a Cohen-Macaulay local ring.  J. Algebraic Geom.  {\bf 6}  (1997), , 733--751.

\bibitem[SH]{SH} I. Swanson and C. Huneke, Integral closure of ideals, rings, and modules, 
London Mathematical Society Lecture Note Series, {\bf 336}. Cambridge University Press, Cambridge, (2006).

%\bibitem{1} 
\end{thebibliography}
\end{document}